\newtheorem{Theorem}{Theorem}[section]
\newtheorem{Lemma}[Theorem]{Lemma}
\newtheorem{Proposition}[Theorem]{Proposition}
\newtheorem{Remark}[Theorem]{Remark}
\newtheorem{Example}[Theorem]{Example}
\def\depth{\operatorname{depth}}
\def\reg{\operatorname{reg}}
\def\mm{{\mathfrak m}}
\def\ZZ{{\mathbb Z}}
\def\NN{{\mathbb N}}
\begin{document}
\title{Castelnuovo-Mumford regularity of \\
 associated graded modules  in dimension one}
\thanks{The author was partially supported by NAFOSTED
(Vietnam).\\ {\it 2000 Mathematics Subject Classification:}  Primary 13D45, 13A30.\\ {\it Key words and phrases:}  Castelnuovo-Mumford regularity, associated graded module,  Hilbert coefficients.}

\maketitle

\begin{center}
LE XUAN DUNG\\
Department of  Natural Sciences,  Hong Duc University\\
307 Le Lai, Thanh Hoa, Vietnam\\
E-mail: lxdung27@gmail.com\\ [15pt]
  \end{center}

\begin{abstract} An  upper bound for  the Castelnuovo-Mumford regularity of the  associated graded
module of an one-dimension module is given  in term of its Hilbert coefficients. It is also investigated
 when the bound is attained.
\end{abstract}

\date{}
\section{Introduction} 
Let $A$ be  a local ring with the maximal ideal $\mm$ and  $M$ be a finitely generated  $A$-module of dimension $r$.
Denote by $G_I(M) = \oplus_{n\geq 0}I^nM/I^{n+1}M $  the associated graded module of $M$
 with respect to  an $\mm$-primary ideal $I$. Because of the importance of the Castelnuovo-Mumford
  regularity $\reg(G_I(M))$ of $G_I(M)$, it is of interest to bound it in terms of other invariants of $M$.
  Rossi, Trung and  Valla \cite{RTV} and Linh \cite{L}   gave bounds on $\reg(G_I(M))$  in terms of
   the so-called extended degree of $M$ with respect to $I$.

On the other hand, Trivedi \cite{Tr1} and Brodmann and Sharp \cite{BS} gave bounds for so-called  Castelnuovo-Mumford
  regularity at and above level 1  in terms of  the  Hilbert coefficients  $e_0(I,M), e_1(I,M), ...,e_{r-1}(I,M)$.
Using also $e_r(I,M)$, Dung and Hoa  \cite{HD} can bound $\reg(G_I(M))$. Bounds in \cite{BS,HD,Tr1} are very big:
they are exponential functions of  $r!$. In the
general case, it follows from \cite[Lemma 11 and Proposition 12]{HH} that in the worst case $\reg(G_I(M))$  must be a double
exponential function of $r$. However one may hope that in small dimensions,  one can give small and sharp bounds.

The aim of this paper is to give a sharp bound in dimension one case.

\medskip

\noindent {\bf Theorem }  {\it Let  $M$ be an one-dimensional module. Let $b$ be the maximal integer such that $IM \subseteq \mm^bM.$ Then
$$\reg(G_I(M)) \le {e_0 -  b+2\choose 2}  - e_1 -1.$$}

\medskip
\noindent  We also give characterizations for the case that the bound is attained (see Theorem \ref{DLB3} and Theorem \ref{DLB4}).

The paper is divided into two sections. In Section 1 we start with a few preliminary results on bounding the
Castelnuovo-Mumford   regularity of  graded modules of dimension at most one. Then we apply these results to derive a bound on $\reg(G_I(M))$
 provided $M$ is a Cohen-Macaulay module of dimension one (Proposition \ref{BDA7}). Using a relation
 between Hilbert coefficients $e_0(I,M)$ and $e_1(I,M)$ given in Rossi-Valla \cite{RV} we can deduce and prove the above bound.

 In Section 2  we give  characterizations for  the case that  the bound is  attained. The characterization is given in terms of the Hilbert-Poincare series (Theorem \ref{DLB4}). In the case of Cohen-Macaulay modules there is also a characterization   in terms of the  Hilbert coefficients (Theorem \ref{DLB3}).

\section{An upper bound} \label{A}

Let $R= \oplus_{n\ge 0}R_n$ be a Noetherian standard graded ring over a local Artinian ring $(R_0,\mm_0)$. Since tensoring with $(R_0/\mm_0)(x)$ doesn't  change invariants considered in this paper, without loss of generality we may always assume that $R_0/\mm_0$ is an infinite field. Let $E$ be a finitely generated graded module of dimension $r$.

First let us recall some notation. For $0\le i \le r$, put
 $$ a_i(E) =
\sup \{n|\ H_{R_+}^i(E)_n \ne 0 \} ,$$
where $R_+ = \oplus _{n>  0} R_n$. The {\it Castelnuovo-Mumford regularity} of $E$  is defined by
$$\reg(E) = \max \{ a_i(E) + i \mid  0\le  i \leq  r  \},$$
and the {\it Castelnuovo-Mumford regularity of $E$  at and above level $1$},   is defined by
$$\reg^1(E) = \max \{ a_i(E) + i \mid  1\leq i \leq  r \}.$$

We denote the Hilbert function $\ell_{R_0}(E_t)$ and the Hilbert polynomial of $E$ by $h_E(t)$ and $p_E(t)$, respectively. Writing $p_E(t)$ in the form:
$$p_E (t) = \sum_{i=0}^{r-1} (-1)^i e_i(E){t+r-1-i \choose r-1-i},$$
we call  the numbers $e_i(E)$  {\it Hilbert coefficients} of $E$.

We know that $h_E(t) = p_E(t)$ for all $t \gg 0.$ The  {\it postulation number} of a finitely generated graded $R$-module $E$  is defined as the number
$$p(E): = \max\{t \mid h_E(t)\neq p_E(t)\}.$$
This number can be read off from the {\it Hilbert-Poincare  series} of $E$, which is defined as follows:
$$HP_E(z) = \sum_{i\geq 0}h_E(i)z^i.$$
The following result is well-known, see e. g.  \cite[Lemma 4.1.7 and Proposition 4.1.12]{BH}.

\begin{Lemma}\label{BDA0} There is a polynomial $Q_E(z)  \in \ZZ[z]$ such that $Q_E(1) \neq 0$ and
$$HP_E(z) = \frac{Q_E(z)}{(1-z)^r}.$$
 Moreover,  $ p(E) = \deg(Q_E(z)) -r.$
\end{Lemma}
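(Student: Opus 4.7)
The plan is to establish the two assertions in turn.

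For the rational form, I would induct on the minimal number of $R_0$-algebra generators of $R$. The base case $R = R_0$ is immediate: $E$ has finite length, $HP_E(z)$ is a polynomial, and $r = \dim E = 0$. For the inductive step, choose $x \in R_1$ in a minimal generating set and form the four-term exact sequence
$$0 \to (0:_E x)(-1) \to E(-1) \to E \to E/xE \to 0,$$
which upon passing to Hilbert-Poincare series yields $(1-z)\,HP_E(z) = HP_{E/xE}(z) - z\,HP_{(0:_E x)}(z)$. Both modules on the right live over $R/(x)$, which needs one fewer algebra generator, so by induction their Hilbert-Poincare series are each of the form $Q(z)/(1-z)^s$. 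Rearranging, dividing by $(1-z)$, and cancelling any remaining common factors of $(1-z)$ in numerator and denominator, one arrives at $HP_E(z) = Q_E(z)/(1-z)^{r'}$ with $Q_E(1) \neq 0$. That $r' = r = \dim E$ then follows from the standard fact that the Hilbert polynomial has degree $\dim E - 1$, coupled with the observation that this degree is detected by the denominator power.

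For the postulation formula, write $Q_E(z) = \sum_{i=0}^{d} a_i z^i$ with $a_d \neq 0$ and $d = \deg Q_E(z)$. Expanding $1/(1-z)^r = \sum_{n \geq 0} \binom{n+r-1}{r-1} z^n$ (the case $r=0$ is handled directly) and multiplying by $Q_E(z)$ gives
$$h_E(n) = \sum_{i=0}^{\min(n,d)} a_i \binom{n-i+r-1}{r-1},$$
while the Hilbert polynomial is $p_E(t) = \sum_{i=0}^{d} a_i \binom{t-i+r-1}{r-1}$. Consequently
$$p_E(n) - h_E(n) = \sum_{i=n+1}^{d} a_i \binom{n-i+r-1}{r-1},$$
and the binomial $\binom{n-i+r-1}{r-1}$, read as the integer value of the usual polynomial in $n$, vanishes exactly for $i \in \{n+1, \ldots, n+r-1\}$. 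Hence the difference is zero whenever $n \geq d - r + 1$, while at $n = d - r$ the unique surviving term $a_d \cdot (-1)^{r-1}$ is nonzero. This forces $p(E) = d - r$, as claimed.

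The main obstacle is pinning down the minimal denominator power to equal $\dim E$; once this dimension-theoretic point is granted, the postulation formula drops out as a routine power-series manipulation. Since the lemma is standard, an alternative is simply to quote \cite[Lemma 4.1.7 and Proposition 4.1.12]{BH}, as the author has done.
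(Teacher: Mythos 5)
Your proof is correct. The paper itself gives no argument for this lemma --- it simply points to \cite{BH} (Lemma 4.1.7 and Proposition 4.1.12) --- so what you have written is, in effect, the standard textbook proof that the citation delegates to: the Hilbert--Serre induction on the number of degree-one algebra generators for the rational form, and binomial-coefficient bookkeeping for the postulation number. Both halves check out. The four-term sequence does give $(1-z)\,HP_E(z)=HP_{E/xE}(z)-z\,HP_{(0:_E x)}(z)$, and your observation that $\binom{t+r-1}{r-1}$, read as a polynomial in $t$, vanishes precisely at $t=-1,\dots,-(r-1)$ is exactly what kills every term of $p_E(n)-h_E(n)$ for $n\ge d-r+1$ and isolates the single nonzero term $a_d(-1)^{r-1}$ at $n=d-r$; this is the correct way to get $p(E)=\deg Q_E(z)-r$, including the nonvanishing at the critical value. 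The one place you lean on an external fact is the identification of the reduced pole order $r'$ with $\dim E$: that is the substance of the dimension theory in \cite{BH} (the Hilbert polynomial has degree $\dim E-1$, and the pole order is $\deg p_E+1$), and since the lemma is explicitly labelled well known, deferring to it there is reasonable --- though in a fully self-contained write-up that is the one step still owed an argument. Your closing remark that one may simply quote \cite{BH}, as the author does, matches the paper exactly.
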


\begin{Remark}\label{BDA1}{\rm
From the Grothendieck-Serre  formula
\begin{eqnarray}\label{a}
h_E(t)  - p_E(t) = \sum_{i=0}^r(-1)^i\ell(H_{R_+}^{i}(E)_t),
\end{eqnarray}we easily get
\begin{itemize}
 \item[(i)] If $\depth(E) = 0$ then $p(E) \leq \reg(E)$;
 \item[(ii)] If $\depth(E) > 0$ then $p(E) \leq \reg(E) -1$.
 \end{itemize}}
\end{Remark}

Let $d(E)$ denote the maximal generating degree of $E$. For short, we also write $d: =d(E)$ and $e: = e_0(E)$. The proof of the following result is similar to that of  \cite[Lemma 4.2]{H}.

 \begin{Lemma} \label{BDA2}Assume that  $\dim(E) = 0$.  Let $q: = \sum_{i\leq d}\ell(E_i)$.   Then
\begin{itemize}
 \item[(i)]  $\ \reg(E) \leq d + e - q$.
 \item[(ii)] The following conditions are equivalent:

                \begin{itemize}
 \item[(a)] $\reg(E)  = d + e - q$;
 \item[(b)]
 $ h_E(t) =
\begin{cases}
1  \ \ \ \ \text{if} \  d + 1 \leq t \leq d + e -q ,\\
0  \ \ \ \ \text{if} \ \ t \geq d + e -q +1.
\end{cases}$
                \end{itemize}
 \end{itemize}
\end{Lemma}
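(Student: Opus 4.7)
The plan is to reduce $\reg(E)$ to the top nonzero degree of $E$ and then bound that degree via a counting argument on the Hilbert function. Since $\dim(E)=0$ and $E$ is finitely generated over the graded Noetherian ring $R$, $\operatorname{Supp}(E)$ consists only of the graded maximal ideal $\mm_0\oplus R_+$, so some power of $R_+$ annihilates $E$. Hence $E=H^0_{R_+}(E)$ and $H^i_{R_+}(E)=0$ for all $i\geq 1$, which gives $\reg(E)=a_0(E)=\max\{n:E_n\neq 0\}$. Denote this maximum by $m$, and note that $m\geq d$ since $E$ has a generator in degree $d$.

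The key step exploits that $E$ is generated in degrees $\leq d$ together with $R$ being standard graded. For every $n>d$, the identity $E_n=\sum_{j\leq d}R_{n-j}E_j$ combined with $R_{n-j}=R_1R_{n-j-1}$ yields $E_n=R_1E_{n-1}$. In particular, $E_n\neq 0$ forces $E_{n-1}\neq 0$ whenever $n>d$, so iterating shows $\ell(E_j)\geq 1$ for every $d\leq j\leq m$. Summing,
$$e-q=\sum_{j>d}\ell(E_j)\geq m-d,$$
so $m\leq d+e-q$, which is part (i); the edge case $m=d$ is automatic since $e\geq q$.

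For (ii), the equality $\reg(E)=d+e-q$ forces $m=d+e-q$ and equality throughout the chain above, so $\ell(E_j)=1$ for $d+1\leq j\leq m$ and $\ell(E_j)=0$ for $j>m$, which is precisely condition (b); the converse is immediate from reading off $\reg(E)$ directly from the listed values of $h_E$. The only step requiring genuine care is the identity $E_n=R_1E_{n-1}$ for $n>d$, which is the technical engine driving the whole argument and should be verified explicitly using standard gradedness.
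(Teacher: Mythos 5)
Your proof is correct and follows essentially the same route as the paper's: identify $\reg(E)$ with the top nonzero degree, observe that $E_j\neq 0$ for all $d\leq j\leq \max\{t\mid E_t\neq 0\}$, and count lengths to get $m-d\leq e-q$, with the equality analysis in (ii) falling out of when that count is tight. The only difference is that you spell out the justification (via $E_n=R_1E_{n-1}$ for $n>d$) of the nonvanishing of the intermediate graded pieces, which the paper merely asserts.
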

\begin{proof} (i) Let $m = \reg(E)$. Since $\dim(E) = 0$, $ m = \max\{t\mid E_{t} \neq 0\}$. Note that $E_i \neq 0$ for all $d \leq i \leq m$. Hence
\begin{equation}\label{c1.1}
m \leq d + \ell(E_{d+1}\oplus \cdots \oplus E_m) =d  +\ell(E)-q =d + e -q.
\end{equation}
(ii) By (\ref{c1.1}) is it clear that $m =  d + e -q$ if only if $
\ell(E_i) = 1 \text{ for all } d + 1 \leq i \leq m$. Since $
\ell(E_i) = 0 \text{ for all } i > m$, we get the equivalence of  (a) and  (b).
\end{proof}

\begin{Lemma}\label{BDA4} Let $E$ be an one-dimensional  Cohen-Macalay module. Let  $z\in R_1$ be an $E$-regular element and $\rho: = \ell(E_{d(E/zE)})$. Then
\begin{itemize}
 \item[(i)]  $\ \reg(E) \leq d + e - \rho$.
 \item[(ii)] The following conditions are equivalent:

                \begin{itemize}
 \item[(a)] $\reg(E)  =d + e - \rho$;
 \item[(b)]
 $  h_E(t) =
\begin{cases}
t-d + \rho \ \ \ \ \ \ \ \text{if} \ d + 1 \leq t \leq d + e -\rho  -1 ,\\
e \ \ \ \ \ \ \ \ \ \ \ \ \ \ \ \ \text{if} \  t \geq d + e -\rho;
\end{cases}$
 \item[(c)] $p(E) =d+ e - \rho -1.$
                \end{itemize}
 \end{itemize}
\end{Lemma}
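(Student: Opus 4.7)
The plan is to reduce everything to Lemma \ref{BDA2} by passing to the zero-dimensional quotient $E/zE$. Three facts drive this reduction. First, applying Nakayama to a minimal homogeneous generating set shows $d(E/zE)=d(E)=d$, and since $E$ is one-dimensional Cohen-Macaulay, $e_0(E/zE)=\ell(E/zE)=e_0(E)=e$. Second, the short exact sequence $0\to E(-1)\xrightarrow{z}E\to E/zE\to 0$ yields $\ell((E/zE)_t)=\ell(E_t)-\ell(E_{t-1})$ in each degree. Third, I claim $\reg(E)=\reg(E/zE)$: the long exact sequence in local cohomology, together with $H^0_{R_+}(E)=0$ (by depth) and $H^i_{R_+}(E/zE)=0$ for $i>0$ (by dimension), collapses to $0\to E/zE\to H^1_{R_+}(E)(-1)\xrightarrow{z}H^1_{R_+}(E)\to 0$, from which both inequalities follow by inspecting the top degree in which $H^1_{R_+}(E)$ is nonzero.

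For (i), set $q:=\sum_{i\le d}\ell((E/zE)_i)$. Substituting $\ell((E/zE)_i)=\ell(E_i)-\ell(E_{i-1})$ collapses the sum telescopically to $\ell(E_d)=\rho$ (using $\ell(E_i)=0$ for $i<d$). Lemma \ref{BDA2}(i) applied to $E/zE$ then gives $\reg(E)=\reg(E/zE)\le d+e-q=d+e-\rho$.

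For (ii), the equivalence (a)$\Leftrightarrow$(b) follows by applying Lemma \ref{BDA2}(ii) to $E/zE$: the condition (b) there prescribes $h_{E/zE}(t)$ to be $1$ for $d+1\le t\le d+e-\rho$ and $0$ afterwards, and translating via $h_{E/zE}(t)=h_E(t)-h_E(t-1)$ together with the initial value $h_E(d)=\rho$ gives precisely the Hilbert function in (b) here. For (a)$\Leftrightarrow$(c): since $\depth(E)=1$, Remark \ref{BDA1}(ii) gives $p(E)\le\reg(E)-1$, so $p(E)\le d+e-\rho-1$ in general by (i); equality on one side thus forces equality on the other, establishing (c)$\Rightarrow$(a). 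For (a)$\Rightarrow$(c), use (b): $h_E$ differs from $p_E(t)=e$ at $t=d+e-\rho-1$ (value $e-1$) and agrees for $t\ge d+e-\rho$, so $p(E)=d+e-\rho-1$.

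The main technical point to get right is the equality $\reg(E)=\reg(E/zE)$: one inequality is formal from the short exact sequence, but the other relies on the Cohen-Macaulay hypothesis through the vanishing of $H^0_{R_+}(E)$. Everything else is bookkeeping with the Grothendieck--Serre formula and telescoping Hilbert-function differences.
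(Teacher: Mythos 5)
Your proposal is correct and follows essentially the same route as the paper: reduce to Lemma \ref{BDA2} via the Artinian quotient $E/zE$, use the telescoping identity $\sum_{i\le t}h_{E/zE}(i)=h_E(t)$ to identify $q$ with $\rho$, and close the cycle of equivalences with Remark \ref{BDA1}(ii). The only (harmless) differences are that you justify $\reg(E)=\reg(E/zE)$ and $d(E/zE)=d(E)$ explicitly where the paper merely asserts or weakens them, and you verify $p(E)=d+e-\rho-1$ by direct comparison of $h_E$ with $p_E\equiv e$ instead of invoking Lemma \ref{BDA0}.
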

\begin{proof} (i) Since $z\in R_1$  is an $E$-regular element, we have $e(E/zE) = e$, $\reg(E/zE) = \reg(E)$ and
 \begin{eqnarray}\label{c}
\sum_{i \leq t}h_{E/zE}(i) = h_E(t).
\end{eqnarray}
In particular  $\sum_{i \leq d(E/zE)}h_{E/zE}(i)  = \rho$.
Note that  $d(E/zE) \leq d(E)$. By Lemma \ref{BDA2},
\begin{equation}\label{c1.2}
\reg(E/zE) \leq d(E/zE)+e(E/zE)-\sum_{i \leq d(E/zE)}h_{E/zE}(i) \leq d + e - \rho.
\end{equation}
 Hence  $\reg(E) \leq d + e - \rho$.

(ii) (a) $\Longrightarrow$ (b):  If $\reg(E) = d + e - \rho$, then from (\ref{c1.2}) it implies that $d(E/zE) = d(E)$ and $\reg(E/zE) = d(E/zE) + e - \rho$. Using Lemma \ref{BDA2} (ii) and (\ref{c}) we get (b).

(b) $\Longrightarrow$ (c) follows from  Lemma \ref{BDA0}.

 (c) $\Longrightarrow$ (a):  Suppose that  $p(E) =d + e - \rho -1$. By Remark \ref{BDA1} (ii),
 we get $ d + e - \rho \leq \reg(E)$. Since $\reg(E) \leq d + e - \rho$, we get $\reg(E)= d + e - \rho.$
\end{proof}

Let $(A,\mm)$ be a Noetherian  local ring with an infinite residue field $K:= A/\mm$ and $M$ a finitely generated $A$-module of dimension $r$.  In this paper, we always assume that $I$ is an $\mm$-primary ideal.  The {\it associated graded module} of $M$
 with respect to  $I$ is defined by
  $$G_I(M) = \bigoplus_{n\geq 0}I^nM/I^{n+1}M. $$
This is a module over the associated graded ring $G =  \bigoplus_{n\geq 0}I^n/I^{n+1}.$ Let $ HP_{I,M}(z) := HP_{G_I(M)}(z)$. We call    $H_{I,M} (n) = \ell(M/I^{n+1}M)$ the Hilbert-Samuel function of $M$ w.r.t. $I$. This function  agrees with a polynomial - called Hilbert-Samuel polynomial and denote by $P_{I,M}(n)$ - for $n \gg 0$. If we  write
$$P_{I,M} (n) = e_0(I,M){n+r \choose r} - e_1(I,M){n+r-1 \choose r-1} + \cdots + (-1)^r e_r(I,M),$$
then the integers  $e_i: = e_i(I,M)$ for all $i = 1,...,r$ are called {\it Hilbert coefficients} of $M$ with respect to $I$ (see \cite[Section 1]{RV}).

Assume that  M is a Cohen-Macaulay module. Kirby and Mehran \cite{KM} were able to show that $e_1\leq {e_0 \choose 2}$.  This result was improved by Rossi-Valla as follows:

\begin{Lemma}\label{BDA5}  \cite[Proposition 2.8]{RV} Let  $M$ be an one-dimensional  Cohen-Macalay module. Let  $b$ be a positive integer such that $IM \subseteq \mm^b M.$ Then
\begin{itemize}
 \item[(i)]   $\ e_1\leq {e_0 - b +1 \choose 2}$.
 \item[(ii)] The following conditions are equivalent:

                \begin{itemize}
 \item[(a)]   $ e_1= {e_0 - b +1 \choose 2}$;
 \item[(b)]  $\ HP_{I,M}(z) = \frac{b+\sum_{i=1}^{e_0-b}z^i}{1-z}.$
                \end{itemize}
 \end{itemize}
\end{Lemma}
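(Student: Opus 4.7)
The plan is to reduce to the Artinian case by modding out a superficial element of $I$, and then solve an elementary extremal problem on the numerator of the Hilbert--Poincar\'e series. Since $R_0/\mm_0$ is infinite and $M$ is one-dimensional Cohen--Macaulay, I would pick $x \in I$ that is $M$-regular and whose initial form $x^* \in G$ is $G_I(M)$-regular. Setting $\bar M := M/xM$, the short exact sequence
$$0 \To G_I(M)(-1) \To G_I(M) \To G_I(\bar M) \To 0$$
yields $HP_{I,M}(z) = Q(z)/(1-z)$, where $Q(z) := HP_{G_I(\bar M)}(z) = \sum_{n \ge 0}\bar h_n z^n$ and $\bar h_n := \ell(I^n \bar M/I^{n+1}\bar M) \ge 0$. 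Since $\bar M$ is Artinian of length $e_0$, evaluation and differentiation of $Q$ at $z=1$ give $e_0 = \sum_n \bar h_n$ and $e_1 = \sum_n n\bar h_n$.

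Two structural constraints on $(\bar h_n)$ do all the work. First, from $I\bar M \subseteq \mm^b\bar M$ together with the fact that Nakayama forces the chain $\bar M \supsetneq \mm\bar M \supsetneq \mm^2\bar M \supsetneq \cdots$ to be strictly decreasing until it reaches $0$, we obtain, in the only interesting regime $b \le e_0$,
$$\bar h_0 = \ell(\bar M/I\bar M) \ge \ell(\bar M/\mm^b\bar M) \ge b.$$
Second, the same Nakayama argument applied to $\bar M \supsetneq I\bar M \supsetneq I^2\bar M \supsetneq \cdots$ shows that, setting $N := \max\{n : I^n\bar M \ne 0\}$, one has $\bar h_n \ge 1$ for $1 \le n \le N$ and $\bar h_n = 0$ for $n > N$. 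Combining these constraints with $\sum_n \bar h_n = e_0$ gives $N \le e_0 - b$.

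Part (i) is now a finite optimization. Writing $\bar h_n = 1 + a_n$ with $a_n \ge 0$ for $1 \le n \le N$ and $\sum_{n=1}^N a_n \le e_0 - b - N$, and using $n \le N$ inside the sum, I obtain
$$e_1 = \sum_{n=1}^N n\bar h_n \;\le\; \binom{N+1}{2} + N(e_0 - b - N) \;=\; N(e_0 - b) - \binom{N}{2},$$
and a forward-difference calculation shows the right-hand side is strictly increasing in $N$ on $\{0,1,\dots,e_0-b\}$, hence maximized at $N = e_0 - b$ with value $\binom{e_0-b+1}{2}$. For (ii), the implication (b)$\Rightarrow$(a) is immediate from $Q'(1) = 1 + 2 + \cdots + (e_0-b) = \binom{e_0-b+1}{2}$. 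Conversely, equality in (i) forces every slack inequality in the above optimization to be simultaneously tight, so $N = e_0 - b$, $\bar h_0 = b$, and $\bar h_n = 1$ for $1 \le n \le e_0 - b$; hence $Q(z) = b + \sum_{i=1}^{e_0-b} z^i$ and (b) follows. The one delicate point I would pin down carefully is the existence of a superficial element whose initial form is $G_I(M)$-regular (the classical Valla-type argument for one-dimensional Cohen--Macaulay modules over a local ring with infinite residue field); handling of the degenerate regimes $b = e_0$ or $b > e_0$ is straightforward thereafter.
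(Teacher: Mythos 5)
You should first note that the paper does not prove Lemma \ref{BDA5} at all: it is imported verbatim from Rossi--Valla \cite[Proposition 2.8]{RV}, with the remark that part (ii) follows from the proof given there. So your argument has to stand on its own, and its key reduction step fails. You choose $x\in I$ that is $M$-regular \emph{and} whose initial form $x^*$ is $G_I(M)$-regular, and you use the exact sequence $0\to G_I(M)(-1)\to G_I(M)\to G_I(\bar M)\to 0$ to identify the numerator $Q(z)=(1-z)HP_{I,M}(z)$ with $HP_{G_I(\bar M)}(z)$. But the existence of such an $x^*$ forces $\depth G_I(M)\ge 1$, and for a one-dimensional Cohen--Macaulay module this can genuinely fail; there is no ``classical Valla-type argument'' producing it. The paper's own Example \ref{VDB1} is a counterexample to your step: $A=k[[t^3,t^4,t^5]]$ is a one-dimensional Cohen--Macaulay domain, $I=(t^3,t^4)$, $b=1$, and $G_I(A)$ is not Cohen--Macaulay. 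There $HP_{I,A}(z)=(2+z^2)/(1-z)$, so $Q(z)=2+z^2$, whereas for the superficial element $x=t^3$ one computes $HP_{G_I(A/xA)}(z)=2+z$. Hence $Q(z)\ne HP_{G_I(\bar M)}(z)$, the identity $e_1=\sum_n n\bar h_n$ with $\bar h_n=\ell(I^n\bar M/I^{n+1}\bar M)$ is false ($e_1=2$ while $\sum_n n\bar h_n=1$), and your structural constraint ``the numerator coefficients are $\ge 1$ for $1\le n\le N$ and vanish afterwards'' is violated by the true numerator, which has $q_1=0$ but $q_2=1$. The finite optimization and the equality analysis that follow are arithmetically correct, but they rest entirely on constraints that the actual coefficients of $Q(z)$ need not satisfy.

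This is not a repairable technicality within your framework. The route actually taken in \cite{RV} avoids $G_I(\bar M)$: for a superficial $M$-regular $x\in I$ one has $e_0=\ell(M/xM)$ and $e_1=\sum_{n\ge 0}v_n$ with $v_n=\ell(I^{n+1}M/xI^nM)$, and one must bound these $v_n$ directly (they differ in general both from $\ell(I^{n+1}\bar M/I^{n+2}\bar M)$ and from $q_{n+1}$). In the same example $v_0=v_1=1$, so even the pointwise estimate $v_n\le v_0-n$ that would make your summation work is false; the hypothesis $IM\subseteq\mm^bM$ has to enter the bound on \emph{each} $v_n$, not merely on the degree-zero piece. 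Until the reduction to a correct set of nonnegative invariants with the right constraints is in place, neither (i) nor the equivalence in (ii) is established.
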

Note that (ii) is not formulated in  \cite[Proposition 2.8]{RV}, but it immediately follows from  the proof of that result.

\begin{Remark} \label{CY0}{\rm
(i) If $ e_1= {e_0 - b +1 \choose 2}$, then from (i) of the above lemma it follows that $b = \max\{t\mid IM \subseteq \mm^tM\}$.

(ii)  If $I = \mm$ and $M=A$ then $b = 1$ and Lemma \ref{BDA5} (ii) is \cite[Proposition 2.13]{EV} (see also \cite[Theorem 3.1]{ERV}).}
\end{Remark}

 Set $\overline{G_I(M)}: = G_I(M)/H^0_{G_+}(G_I(M))$.

\begin{Lemma}\label{BDA6} Let  $M$ be an one-dimensional   module. Let $b$ be a positive integer such that $IM \subseteq \mm^bM.$ Then $$h_{\overline{G_I(M)}}(0) \geq b.$$
\end{Lemma}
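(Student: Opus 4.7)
The plan is to identify $h_{\overline{G_I(M)}}(0)$ with the length of an explicit quotient $M/L$ of $M$, and then prove $\ell(M/L)\ge b$ by a pigeonhole argument followed by two applications of Nakayama's lemma, the final one yielding a contradiction with $\dim M = 1$.

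Because $G = G_I(A)$ is generated in degree one, one has $G_+^n = \bigoplus_{k\ge n}G_k$, so for $m\in M$ the class $m+IM$ lies in $H^0_{G_+}(G_I(M))_0$ if and only if $I^k m \subseteq I^{k+1}M$ for some $k\ge 0$. Hence $H^0_{G_+}(G_I(M))_0 = L/IM$ where
\[L := \{\,m\in M\mid I^k m\subseteq I^{k+1}M \text{ for some } k\ge 0\,\},\]
and therefore $\overline{G_I(M)}_0 = M/L$. It thus suffices to show $\ell(M/L)\ge b$.

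Suppose for contradiction that $\ell(M/L) < b$, and consider the decreasing chain of $b+1$ submodules
\[M\supseteq \mm M + L\supseteq \mm^2 M + L\supseteq\cdots\supseteq \mm^b M + L\]
all containing $L$. Since $\ell(M/L)<b$, two consecutive terms must coincide, giving $\mm^i M + L = \mm^{i+1}M+L$ for some $0\le i\le b-1$. Applying Nakayama's lemma to the finitely generated module $(\mm^i M + L)/L$, which by the displayed equality equals $\mm\cdot[(\mm^i M+L)/L]$, yields $\mm^i M\subseteq L$. Because $\mm^i M$ is finitely generated, I would then choose a common $k\gg 0$ with $\mm^i I^k M = I^k\mm^i M\subseteq I^{k+1}M$. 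The hypothesis $xM\subseteq \mm^b M$ for $x\in I$ iterates to $I^{k+1}M = I\cdot I^k M\subseteq \mm^b I^k M$, so $\mm^i I^k M\subseteq \mm^b I^k M$, while $i<b$ gives the reverse inclusion $\mm^b I^k M\subseteq \mm^i I^k M$; hence $\mm^i I^k M = \mm^b I^k M = \mm^{b-i}\cdot\mm^i I^k M$, and Nakayama forces $\mm^i I^k M = 0$. But $\dim I^k M = \dim M = 1$ (since $M/I^k M$ has finite length), so $\mm^i I^k M\ne 0$, a contradiction.

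The key subtlety is choosing the filtration $\{\mm^j M + L\}$ rather than $\{\mm^j M\}$: it is tailored so that a repetition in the chain directly produces a containment $\mm^i M\subseteq L$ for some $i<b$, which is exactly the input needed to activate the defining property of $L$ and trigger the second Nakayama step that collapses $\mm^i I^k M$ to zero and contradicts $\dim M = 1$.
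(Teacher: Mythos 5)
Your proof is correct and follows essentially the same route as the paper: your $L$ is exactly the paper's $T_0=\bigcup_{n>0}(I^{n+1}M:I^n)$, your chain $\{\mm^jM+L\}$ modulo $L$ is the paper's chain $\{\mm^j\widetilde M\}$ in $\widetilde M=M/T_0$, and the final step (a repetition forces $\mm^iI^kM=\mm\cdot\mm^iI^kM$ via $IM\subseteq\mm^bM$, so Nakayama kills $I^kM$ and contradicts $\dim M=1$) is the paper's argument with $i=b-1$. No gaps; the only cosmetic difference is that the paper checks strictness only at the last link of the chain, which suffices since $\mm^i\widetilde M=\mm^{i+1}\widetilde M$ propagates forward.
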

\begin{proof} Note that
$$[H_{G_+}^0(G_I(M))]_0 = \frac{T_0}{IM},$$
 where $T_0 = \cup_{n>0}(I^{n+1}M:I^n )= I^{t+1}M:I^t \subseteq M$ for some $t \gg 0.$
 Set $\widetilde{M}: = M/T_0$. Suppose  that $$\mm^{b-1}\widetilde{M} = \mm^{b}\widetilde{M}.$$
 By Nakayama's Lemma, this gives  $\mm^{b-1}\widetilde{M} = 0$.  Consequently, $\mm^{b-1}M \subseteq T_0. $
 Then $\mm^{b-1}I^tM \subseteq I^{t+1}M$. Since  $IM \subseteq \mm^bM,$
 $$I^{t+1}M \subseteq\mm^b I^tM \subseteq \mm I^{t+1}M \subseteq I^{t+1}M.$$
 Hence $I^{t+1}M = \mm I^{t+1}M$. By  Nakayama's Lemma, we get $I^{t+1}M = 0$. This implies  $\dim(M) \leq 0$, a  contradiction.
 Therefore $\mm^{b-1}\widetilde{M} \neq \mm^{b}\widetilde{M}$ and we obtain a strict chain of submodules of $\widetilde{M}$:
 $$\widetilde{M} \supsetneqq  \mm \widetilde{M} \supsetneqq \cdots \supsetneqq \mm^b\widetilde{M}.$$
   Since $\overline{G_I(M)}_0 = M/T_0 = \widetilde{M},$ we must have $$h_{\overline{G_I(M)}}(0) = \ell( \widetilde{M}) \geq \ell( \widetilde{M}/\mm^b\widetilde{M}) \geq b.$$
\end{proof}

The following result was  given by L. T. Hoa  \cite[Theorem 5.2]{H} for rings, but its proof also holds for modules.

\begin{Lemma} \label{BDA6}Let  $M$ be a module of positive depth.  Then
$$a_0(G_I(M)) \leq a_1(G_I(M)) - 1.$$
\end{Lemma}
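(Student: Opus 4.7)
The plan is to exploit a filter-regular element on $E := G_I(M)$ derived from an $M$-regular superficial element, and derive the inequality from the induced long exact sequence of local cohomology together with an iterated construction. Since $\depth M \geq 1$ and $A/\mm$ is infinite, we may choose $x \in I$ that is simultaneously $M$-regular and superficial for $I$ on $M$; its initial form $x^* \in G_1$ is then filter-regular on $E$, so $K := (0 :_E x^*)$ is a graded submodule of $H^0_{G_+}(E)$ of finite length.

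From the four-term exact sequence $0 \to K(-1) \to E(-1) \xrightarrow{x^*} E \to E/x^*E \to 0$, splitting through the image of $x^*$ and taking local cohomology (using $H^i_{G_+}(K) = 0$ for $i \geq 1$), one obtains in every degree $n$ the exact piece
$$H^0_{G_+}(E)_{n-1}/K_{n-1} \xrightarrow{x^*} H^0_{G_+}(E)_n \to H^0_{G_+}(E/x^*E)_n \to H^1_{G_+}(E)_{n-1} \xrightarrow{x^*} H^1_{G_+}(E)_n.$$
Set $a := a_0(E)$; the claim is trivial if $H^0_{G_+}(E) = 0$, so assume $a \geq 0$. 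For $n \geq a + 2$ both $H^0_{G_+}(E)_{n-1}$ and $H^0_{G_+}(E)_n$ vanish, so the sequence specializes to an injection $H^0_{G_+}(E/x^*E)_n \hookrightarrow H^1_{G_+}(E)_{n-1}$. Thus to conclude $a_1(E) \geq a+1$, it suffices to produce a nonzero element of $H^0_{G_+}(E/x^*E)_n$ for some $n \geq a + 2$.

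Pick $u \in H^0_{G_+}(E)_a$ nonzero, represented by $m \in I^aM \setminus I^{a+1}M$. Since $x^* u \in H^0_{G_+}(E)_{a+1} = 0$ we have $xm \in I^{a+2}M$, and the class $[xm] \in (E/x^*E)_{a+2}$ is our first candidate; its $G_+$-torsion is immediate from that of $u$, for if $I^t m \subseteq I^{a+t+1}M$ then $I^t \cdot xm \subseteq x I^{a+t+1}M \subseteq x^* E$. If $[xm] \neq 0$, we are done. Otherwise, write $xm = z + xw$ with $z \in I^{a+3}M$ and $w \in I^{a+1}M$; set $m^{(1)} := m - w$, still a representative of $u$ (since $w \in I^{a+1}M$), with $xm^{(1)} = z \in I^{a+3}M$. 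Iterating produces a sequence $m = m^{(0)}, m^{(1)}, \ldots$ with $m^{(k+1)} - m^{(k)} \in I^{a+k+1}M$ and $xm^{(k)} \in I^{a+k+2}M$.

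The hard part is to show this iteration terminates in finitely many steps. If not, choose $n_0$ such that $(I^{n+1}M : x) = I^n M$ for all $n \geq n_0$ (possible since $x$ is $M$-regular and superficial). For $k$ large enough that $a + k + 1 \geq n_0$, we have $m^{(k)} \in (I^{a+k+2}M : x) = I^{a+k+1}M \subseteq I^{a+1}M$; combined with $m - m^{(k)} = \sum_{j<k}(m^{(j)} - m^{(j+1)}) \in I^{a+1}M$, this forces $m \in I^{a+1}M$, contradicting the choice of $m$. Hence $[xm^{(k)}]$ is nonzero in $(E/x^*E)_{a+k+2}$ for some finite $k$, yielding the required element of $H^0_{G_+}(E/x^*E)$ in degree $\geq a + 2$.
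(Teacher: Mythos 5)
The paper does not actually prove this lemma: it cites \cite[Theorem 5.2]{H} and asserts that Hoa's proof carries over to modules. So your self-contained argument is necessarily a different route, and in outline it is sound. Writing $E=G_I(M)$ and $a=a_0(E)$, the reduction via the two short exact sequences to producing a nonzero element of $H^0_{G_+}(E/x^*E)_n$ for some $n\ge a+2$ is correct (the four-term sequence splits exactly as you say, and $H^i_{G_+}(K)=0$ for $i\ge 1$ since $K$ has finite length); the iteration $m^{(0)},m^{(1)},\dots$ is well set up; and the termination argument via $(I^{n+1}M:_Mx)=I^nM$ for $n\ge n_0$ is precisely the right use of superficiality plus $M$-regularity.

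One step is under-justified, though it is repairable with the very tool you introduce for termination. You check that $[xm]\in(E/x^*E)_{a+2}$ is $G_+$-torsion by noting $I^t\cdot xm\subseteq xI^{a+t+1}M$; this works because the degree-$(a+t+2)$ component of $x^*E$ is exactly the image of $xI^{a+t+1}M$. But at the end you need $[xm^{(k)}]\in H^0_{G_+}(E/x^*E)_{a+k+2}$ for the terminal $k$, and the same computation only yields $I^t\cdot xm^{(k)}\subseteq xI^{a+t+1}M$, whereas the degree-$(a+k+t+2)$ component of $x^*E$ is the image of $xI^{a+k+t+1}M$; for $k\ge 1$ the containment $xI^{a+t+1}M\cap I^{a+k+t+2}M\subseteq xI^{a+k+t+1}M+I^{a+k+t+3}M$ is not automatic, so ``immediate from that of $u$'' does not cover the iterated candidates. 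The fix is one line: for $t\ge n_0$, from $x\cdot I^tm^{(k)}=I^t\cdot xm^{(k)}\subseteq I^{a+k+t+2}M$ and $(I^{a+k+t+2}M:_Mx)=I^{a+k+t+1}M$ one gets $I^tm^{(k)}\subseteq I^{a+k+t+1}M$, hence $I^t\cdot xm^{(k)}\subseteq xI^{a+k+t+1}M$, which does land in $(x^*E)_{a+k+t+2}$. With that inserted, your proof is complete, and it has the advantage over the paper's treatment of being explicit and self-contained rather than an appeal to a proof written only for rings.
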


 We are now in the position to  show the main results of this section.
\begin{Proposition} \label{BDA7}
Let  $M$ be an one-dimensional  Cohen-Macalay module. Let $b$ be the maximal integer such that $IM \subseteq \mm^bM.$ Then
$$\reg(G_I(M)) \leq e_0 - b.$$
\end{Proposition}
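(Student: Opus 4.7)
The plan is to reduce to the module $\overline{E}:=\overline{G_I(M)}=G_I(M)/H^0_{G_+}(G_I(M))$, which is one-dimensional Cohen-Macaulay by construction, apply Lemma \ref{BDA4} to it, and then transfer the bound back to $G_I(M)$. The transfer step rests on Hoa's inequality $a_0(G_I(M))\le a_1(G_I(M))-1$ quoted in the excerpt, which applies because $M$ is Cohen-Macaulay of positive depth; together with $\reg(G_I(M))=\max\{a_0(G_I(M)),\,a_1(G_I(M))+1\}$, this forces $\reg(G_I(M))=a_1(G_I(M))+1$. Since $H^0_{G_+}(G_I(M))$ has Krull dimension zero, the long exact sequence of local cohomology attached to
\[
0\to H^0_{G_+}(G_I(M))\to G_I(M)\to\overline{E}\to 0
\]
gives $H^1_{G_+}(\overline{E})\cong H^1_{G_+}(G_I(M))$, and because $\overline{E}$ has positive depth, $\reg(\overline{E})=a_1(\overline{E})+1=\reg(G_I(M))$.

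Next I would apply Lemma \ref{BDA4} to $\overline{E}$. Because $R_0/\mm_0$ is infinite, one can pick a superficial element $z\in I$; it is $M$-regular and its initial form $z^*\in R_1$ is $\overline{E}$-regular. Since $G_I(M)$ is generated in degree $0$, so are $\overline{E}$ and $\overline{E}/z^*\overline{E}$; hence $d(\overline{E})=0$ and $d(\overline{E}/z^*\overline{E})=0$. Moreover $e_0(\overline{E})=e_0$, since $H^0_{G_+}(G_I(M))$ has finite length and hence does not affect the leading Hilbert coefficient. Lemma \ref{BDA4}(i) then yields
\[
\reg(\overline{E})\le 0+e_0-\rho,\qquad \rho=\ell(\overline{E}_0)=h_{\overline{E}}(0).
\]

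Finally, the preliminary lemma showing $h_{\overline{G_I(M)}}(0)\ge b$ (proved just above via the strict chain $\widetilde{M}\supsetneqq\mm\widetilde{M}\supsetneqq\cdots\supsetneqq\mm^b\widetilde{M}$) gives $\rho\ge b$, so $\reg(G_I(M))=\reg(\overline{E})\le e_0-b$. The most delicate point is the reduction $\reg(G_I(M))=\reg(\overline{E})$: without Hoa's inequality, the torsion part $H^0_{G_+}(G_I(M))$ could a priori produce a contribution $a_0(G_I(M))$ larger than $a_1(G_I(M))+1$, and the bound on $\rho$ from the preliminary lemma would no longer suffice. Once that reduction is secured, the remainder is a direct application of Lemma \ref{BDA4} together with the lower bound $h_{\overline{G_I(M)}}(0)\ge b$.
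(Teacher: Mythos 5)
Your proposal is correct and follows essentially the same route as the paper: reduce to $\overline{G_I(M)}$ via Hoa's inequality $a_0 \le a_1 - 1$, apply Lemma \ref{BDA4}(i) with a regular linear form (noting $d(\overline{G_I(M)}/x^*\overline{G_I(M)})=0$ since $G_I(M)$ is generated in degree $0$), and finish with the bound $h_{\overline{G_I(M)}}(0)\ge b$. You merely spell out the reduction $\reg(G_I(M))=\reg(\overline{G_I(M)})$ in more detail than the paper does.
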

\begin{proof}
Since $\depth(M) > 0$, by Lemma \ref{BDA6},
$$\reg(G_I(M)) = \reg^1(G_I(M)) = \reg(\overline{G_I(M)}).$$
Since $\dim(M) = 1$,   $\overline{G_I(M)}$ is a Cohen-Macaulay module. Assume that $x^* \in I/I^2$ is an $\overline{G_I(M)}$-regular element.
Since $G_I(M)$ is generated in degree $0$, $d(\overline{G_I(M)}/x^*\overline{G_I(M)}) = 0$.
Applying  Lemma \ref{BDA4} (i) and Lemma \ref{BDA6} we get
\begin{equation}\label{EB9}
\reg(\overline{G_I(M)}) \leq e_0 - h_{\overline{G_I(M)}}(0) \leq e_0 - b.
\end{equation}
\end{proof}

\begin{Remark}\label{CYA1}{\rm
Under the above assumption, Linh \cite[Corollary 4.5 (ii)]{L} showed  that $\reg(G_I(M)) \leq e_0-1.$  Hence,
 if $b>1$  the above result  improves Linh's bound.}
\end{Remark}

The following result was formulated  in the introduction.

 \begin{Theorem} \label{DLA8}  Let  $M$ be an one-dimensional  module. Let $b$ be the maximal integer such that $IM \subseteq \mm^bM.$ Then
$$\reg(G_I(M)) \le {e_0 - b +2\choose 2}  - e_1 -1.$$
\end{Theorem}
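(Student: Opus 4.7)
The plan is to split $\reg(G_I(M))$ using the canonical short exact sequence $0\to V\to G_I(M)\to \bar G\to 0$, where $V:=H^0_{G_+}(G_I(M))$ and $\bar G:=\overline{G_I(M)}$. Since $\dim G_I(M)=1$, $V$ has finite length, and $H^0_{G_+}(\bar G)=0$, the long exact sequence of local cohomology yields $\reg(G_I(M))=\max\{a_0(V),\reg(\bar G)\}$. I will bound each of $\reg(\bar G)$ and $a_0(V)$ in turn, using Hilbert--Poincar\'e series bookkeeping to control $\ell(V)$.

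For $\reg(\bar G)$ I repeat the proof of Proposition~\ref{BDA7}: $\bar G$ is Cohen--Macaulay of dimension one and generated in degree zero, with $h_{\bar G}(0)\ge b$ by Lemma~\ref{BDA6}. Applying Lemma~\ref{BDA4} to a $\bar G$-regular element $x^*\in R_1$ gives $\reg(\bar G)\le e_0-h_{\bar G}(0)\le e_0-b$. Because $\bar G$ is Cohen--Macaulay of dimension one, its Hilbert function $h_{\bar G}$ is strictly increasing from $h_{\bar G}(0)\ge b$ to $e_0$ on $[0,s']$ where $s':=\reg(\bar G)\le e_0-b$, so $h_{\bar G}(i)\ge b+i$ on that range. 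Summing yields
$$e_1(\bar G)=\sum_{i=0}^{s'-1}(e_0-h_{\bar G}(i))\le \sum_{i=0}^{s'-1}(e_0-b-i)\le \binom{e_0-b+1}{2},$$
which is the natural extension of Lemma~\ref{BDA5} to $\bar G$. Comparing Hilbert--Poincar\'e series across the short exact sequence gives $Q_{G_I(M)}(z)=Q_{\bar G}(z)+(1-z)HP_V(z)$; differentiating at $z=1$ produces $e_1=e_1(\bar G)-\ell(V)$, and hence
$$\ell(V)\le \binom{e_0-b+1}{2}-e_1.$$

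The crux, which I expect to be the main obstacle, is the torsion bound
$$a_0(V)\le \reg(\bar G)+\ell(V).$$
To establish it I would pick a $\bar G$-regular element $x^*\in R_1$ and apply the snake lemma to multiplication by $x^*$ on the short exact sequence above; since $x^*$ is regular on $\bar G$, the snake sequence collapses to $0\to V/x^*V\to G_I(M)/x^*G_I(M)\to \bar G/x^*\bar G\to 0$, in which $\bar G/x^*\bar G$ is of dimension zero with top degree $\reg(\bar G)$. A careful degree-and-length analysis controlling how $x^*$ (which acts nilpotently on $V$) redistributes the total length $\ell(V)$ across the graded pieces of $V$ should force $a_0(V)\le \reg(\bar G)+\ell(V)$; this is delicate because $x^*$ may be a zero-divisor on $V$, so the snake sequence above is the key tool that separates the contribution of $V$ from that of $\bar G$.

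Granting this inequality,
$$\reg(G_I(M))\le \reg(\bar G)+\ell(V)\le (e_0-b)+\binom{e_0-b+1}{2}-e_1=\binom{e_0-b+2}{2}-e_1-1,$$
where the last equality is $\binom{k+2}{2}=\binom{k+1}{2}+(k+1)$ at $k=e_0-b$.
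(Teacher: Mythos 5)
Your architecture is sound but genuinely different from the paper's. The paper removes the torsion at the level of $M$: it sets $L=H^0_\mm(M)$, uses the exact sequence $0\to K\to G_I(M)\to G_I(M/L)\to 0$ together with the quoted facts $\reg(G_I(M))\le\reg(G_I(M/L))+\ell(L)$ (\cite[Lemma 1.9]{DH}) and $\ell(L)=e_1(I,M/L)-e_1$, $e_0(I,M/L)=e_0$ (\cite[Proposition 2.3]{RV}), and then applies Lemma \ref{BDA5} and Proposition \ref{BDA7} to the Cohen--Macaulay module $M/L$. You instead remove the torsion at the level of the graded module, working with $V=H^0_{G_+}(G_I(M))$ and $\overline{G_I(M)}$; note these are different objects (one has $K\subseteq V$ but in general $K\ne V$ --- in Example \ref{VDB1} $K=0$ while $V\ne 0$). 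Your bookkeeping is correct: $e_0(\bar G)=e_0$ since $V$ has finite length, the identity $e_1=Q'_{G_I(M)}(1)$ gives $\ell(V)=e_1(\bar G)-e_1$, and the strictly increasing Hilbert function argument is a valid direct proof of the Rossi--Valla bound for $\bar G$. Your route is more self-contained, at the price of having to prove the ``no gaps above $\reg(\bar G)$'' property of $V$ yourself.

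That property is exactly where your write-up stops being a proof: you say a careful analysis ``should force'' $a_0(V)\le\reg(\bar G)+\ell(V)$, but you do not carry it out. The inequality is true, and your snake-lemma setup does yield it; here is the missing argument. Write $E=G_I(M)$, $s=\reg(\bar G)$, and let $x^*\in G_1$ be $\bar G$-regular. For $n\ge s$ the snake lemma applied to multiplication by $x^*$ on $0\to V\to E\to\bar G\to 0$ gives the exact sequence $0\to (V/x^*V)_{n+1}\to (E/x^*E)_{n+1}\to(\bar G/x^*\bar G)_{n+1}\to 0$, and since $(\bar G/x^*\bar G)_{n+1}=0$ this means $E_{n+1}=V_{n+1}+x^*E_n$. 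Now suppose $V_t=0$ for some $t\ge s+1$. Then $E_t=x^*E_{t-1}$, hence $E_{t+1}=G_1E_t=x^*G_1E_{t-1}=x^*E_t$. Any $\xi\in V_{t+1}$ can therefore be written $\xi=x^*\eta$ with $\eta\in E_t$; since $(x^*)^NV=0$ for some $N$, we get $(x^*)^{N+1}\eta=0$, so the image of $\eta$ in $\bar G$ is annihilated by a power of the $\bar G$-regular element $x^*$ and hence is zero, i.e. $\eta\in V_t=0$ and $\xi=0$. Thus $V$ has no internal gaps in degrees above $s$, so if $a_0(V)>s$ then $a_0(V)-s\le\ell(V)$, which is the inequality you need. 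With this paragraph inserted, your proof is complete and correct.
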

\begin{proof} Let $L: = H_\mm^0(M).$ By \cite[Lemma 4.3]{L} (or see \cite[Lemma 1.9]{DH}) we have
$$\reg(G_I(M)) \le  \reg(G_I(\overline{M})) + \ell(L).$$
By \cite[Proposition 2.3]{RV},  it implies that $ \ell(L) =\overline{ e}_1 - e_1$ and  $\overline{e}_0 = e_0$, where $\overline{e}_0: = e_0(I,\overline{M})$ and $\overline{e}_1: = e_1(I,\overline{M}).$
Note that $IM \subseteq \mm^bM$ implies $I\overline{M} \subseteq \mm^b\overline{M}.$
Since $\overline{M}$  is a Cohen-Macaulay module,  Lemma \ref{BDA5} (i)   says that $
 \overline{e}_1\leq {e_0 - b +1 \choose 2}.$
This gives
$$
\ell(L) \leq {e_0 - b +1 \choose 2} - e_1.
$$
 By Proposition  \ref{BDA7},
$\reg(G_I(\overline{M})) \leq e_0 - b$.
Hence,
$$
\reg(G_I(M)) \le e_0 -b + {e_0 - b +1\choose 2} -e_1
= {e_0 - b +2\choose 2}  - e_1 -1.
$$
\end{proof}

\section{Extremal case} \label{B}
In this section, let ($A,\mm$) be a Noetherian local ring with an infinite residue field and $I$ an $\mm$-primary ideal. Let $M$ be a finite generated $A$-module.
The aim of this section is to give  characterizations for the case that the bound  in Theorem \ref{DLA8} is attained.

From the Grothendieck-Serre  formula (\ref{a}) and Lemma \ref{BDA6} we immediately  get
\begin{Lemma}\label{BDB0} Let  $M$ be a module of positive depth. Then
$$p(G_I(M)) \leq \reg(G_I(M)) -1.$$
\end{Lemma}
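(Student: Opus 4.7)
The plan is to apply the Grothendieck--Serre formula (\ref{a}) to $E := G_I(M)$ evaluated at the single degree $t = \reg(E)$, and show that the alternating sum on the right vanishes there. This will force $h_E(\reg(E)) = p_E(\reg(E))$, and, combined with the obvious vanishing $h_E(t) = p_E(t)$ for $t > \reg(E)$, will give the desired $p(E) \le \reg(E) - 1$.

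First I would dispose of the higher cohomology terms. By the very definition $\reg(E) = \max_i \{a_i(E) + i\}$, for every $i \ge 1$ we have $a_i(E) \le \reg(E) - i \le \reg(E) - 1 < \reg(E)$, so $H^i_{R_+}(E)_{\reg(E)} = 0$. This handles all the $i \ge 1$ summands in the Grothendieck--Serre formula in one line, with no use of the depth hypothesis.

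The interesting step is the $i = 0$ term, where the assumption on $M$ enters. This is exactly what Lemma~\ref{BDA6} is tailored for: since $M$ has positive depth, $a_0(G_I(M)) \le a_1(G_I(M)) - 1$. Chaining this with the bound $a_1(E) \le \reg(E) - 1$ from the previous paragraph yields $a_0(E) \le \reg(E) - 2 < \reg(E)$, so $H^0_{R_+}(E)_{\reg(E)} = 0$ as well. Substituting into (\ref{a}) at $t = \reg(E)$ makes the whole right-hand side zero, and the conclusion $p(E) \le \reg(E) - 1$ follows.

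The only subtle point, and the reason this lemma is not simply a reference to Remark~\ref{BDA1}(ii), is that $G_I(M)$ need not have positive depth even when $M$ does; so the naive strategy of transferring the depth hypothesis from $M$ to $E$ and quoting Remark~\ref{BDA1}(ii) fails. Lemma~\ref{BDA6} is precisely the bridge that compensates for this, supplying the inequality $a_0(E) \le a_1(E) - 1$ from the weaker hypothesis $\depth M > 0$. With that tool in hand, the proof is little more than bookkeeping on the formula (\ref{a}).
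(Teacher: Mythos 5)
Your proof is correct and follows exactly the route the paper intends: the paper derives this lemma ``immediately'' from the Grothendieck--Serre formula (\ref{a}) together with Lemma \ref{BDA6} ($a_0(G_I(M)) \le a_1(G_I(M))-1$), and your write-up simply makes that one-line deduction explicit, including the correct observation that the $i=0$ term is the only place the depth hypothesis on $M$ (rather than on $G_I(M)$) is needed.
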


\begin{Lemma} \label{BDB1}Let  $M$ be a module with $\dim(M) \geq 1$. Let $L: = H^0_\mm(M)$ and  $\overline{M}: = M/L.$ Then the following conditions are equivalent:
                \begin{itemize}
 \item[(i)]  $\reg(G_I(M)) = \reg(G_I(\overline{M})) + \ell(L)$;
 \item[(ii)]  $HP_K(z) = \sum_{ \reg(G_I(\overline{M})) + 1}^{ \reg(G_I(\overline{M})) + \ell(L)}z^i,$ where $K = \oplus_{n \geq 0}\frac{I^{n+1}M+L\cap I^nM}{I^{n+1}M}.$
                \end{itemize}
\end{Lemma}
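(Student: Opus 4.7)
The plan is to exploit the short exact sequence
\begin{equation*}
0 \to K \to G_I(M) \to G_I(\overline{M}) \to 0
\end{equation*}
from \cite[Lemma 4.3]{L}, in which $K$ is the graded $G$-module appearing in (ii) and has total length $\ell(K)=\ell(L)$. Applying local cohomology and using that $K$ is $G_+$-torsion (so that $H^i_{G_+}(K)=0$ for $i\ge 1$ and $H^0_{G_+}(K)=K$), one obtains $H^i_{G_+}(G_I(M))\cong H^i_{G_+}(G_I(\overline{M}))$ for all $i\ge 1$, together with
\begin{equation*}
0\to K\to H^0_{G_+}(G_I(M))\to H^0_{G_+}(G_I(\overline{M}))\to 0.
\end{equation*}
Writing $r:=\reg(G_I(\overline{M}))$, the inequality $a_0(G_I(\overline{M}))\le r$ gives $K_n\cong H^0_{G_+}(G_I(M))_n$ for every $n>r$; combining this with $\reg^1(G_I(M))=\reg^1(G_I(\overline{M}))\le r$ yields the key formula
\begin{equation*}
\reg(G_I(M))=\max\{a_0(K),\,r\}.
\end{equation*}

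If $\ell(L)=0$ then $K=0$ and both (i) and (ii) hold trivially; so assume $\ell(L)>0$. The displayed formula shows that (i), namely $\reg(G_I(M))=r+\ell(L)>r$, is equivalent to the single equation $a_0(K)=r+\ell(L)$. The implication (ii) $\Rightarrow$ (i) is then immediate, since the leading term of the Hilbert-Poincare series in (ii) is $z^{r+\ell(L)}$.

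For (i) $\Rightarrow$ (ii), the main step is a no-gap property of $K$: $K_n\neq 0$ for every $n$ with $r<n\le a_0(K)$. I would prove this via the Artin-Rees lemma applied to the inclusion $L\subseteq M$, which yields an integer $c$ with $L\cap I^{n+1}M=I(L\cap I^n M)$ for all $n\ge c$; a regularity-based analysis provides the bound $c\le r+1$, so Artin-Rees holds at every $n>r$. For such an $n$, if $K_n=0$ then $L\cap I^n M=I(L\cap I^n M)$, and Nakayama's lemma applied to the finite-length module $L\cap I^n M$ forces $L\cap I^n M=0$; the filtration then stabilizes at zero from $n$ onwards, so $K_m=0$ for all $m\ge n$, contradicting $K_{a_0(K)}\neq 0$. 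With no-gap in hand, the inequalities $h_K(n)\ge 1$ for each $n\in[r+1,\,r+\ell(L)]$, combined with $\sum_n h_K(n)=\ell(L)$, force $h_K(n)=1$ on this interval and $h_K(n)=0$ off it, which is (ii). The principal obstacle I anticipate is the Artin-Rees stability bound $c\le r+1$, which requires translating the regularity control $\reg^1(G_I(M))=r$ into a stability statement for the filtration $L\cap I^n M$.
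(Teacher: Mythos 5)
Your overall architecture matches the paper's: you use the same exact sequence $0 \to K \to G_I(M) \to G_I(\overline{M}) \to 0$, reduce everything to the identity $\reg(G_I(M)) = \max\{a_0(K), \reg(G_I(\overline{M}))\}$ (the paper phrases this as $\reg(G_I(M)) \le \max\{\reg(K),\reg(G_I(\overline{M}))\}$ plus the reverse inequality), and then derive (ii) from (i) by combining a ``no-gap'' property of $K$ in degrees above $\reg(G_I(\overline{M}))$ with the count $\sum_n h_K(n) = \ell(K) = \ell(L)$. The direction (ii) $\Rightarrow$ (i) and the counting argument are correct and essentially identical to the paper's.

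The genuine gap is in your proof of the no-gap property. The paper does not prove this step at all: it quotes it verbatim from \cite[Lemma 1.9]{DH} (``$K_t \neq 0$ for all $a+1 \le t \le m$''). You propose to derive it from the Artin--Rees equality $L \cap I^{n+1}M = I(L \cap I^n M)$ valid for $n \ge c$, together with the bound $c \le \reg(G_I(\overline{M}))+1$ --- and it is exactly this bound that you assert without argument (``a regularity-based analysis provides the bound''). This is not a routine deduction: it amounts to bounding the top generating degree of the graded module $\bigoplus_n (L\cap I^nM)$ over the Rees algebra $\bigoplus_n I^n$ by $\reg(G_I(\overline{M}))+1$, which requires passing between the regularity of the Rees module of $\overline{M}$ and that of $G_I(\overline{M})$ (Ooishi--Trung type comparisons), or some equivalent stability analysis. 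In other words, the one step you leave open carries essentially the full content of the cited \cite[Lemma 1.9]{DH}; once it (or the no-gap property itself) is granted, the remainder of your Nakayama/telescoping argument is fine, but as written the proof is not closed. To repair it, either cite the no-gap statement directly, or supply an actual proof that $L\cap I^{n+1}M = I(L\cap I^nM)$ for all $n > \reg(G_I(\overline{M}))$.
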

\begin{proof}      Note that  $\ell(K) = \ell(L)$. If $L = 0$,   there is nothing to prove. Assume that  $L \neq 0$.

(ii) $\Longrightarrow$ (i): Since $\dim(K) = 0$, $\reg(K) = \reg(G_I(\overline{M})) + \ell(L).$ From the short exact sequence
\begin{equation}\label{EB14}
0 \longrightarrow K \longrightarrow G_I(M)\longrightarrow G_I(\overline{M}) \longrightarrow 0,
 \end{equation}
we see that (see e. g., \cite[Corollary 20.19  (d)]{E})
$$\reg(G_I(M))  = \max\{\reg(K),\reg(G_I(\overline{M}))\} =  \reg(G_I(\overline{M})) + \ell(L).$$

(i) $\Longrightarrow$ (ii):  Set $a = \reg(G_I(\overline{M}))$ and $m = \max\{t\mid K_t \neq 0\}$. It was proved in \cite[Lemma 1.9]{DH} that
$K_t \neq 0$ for all $a +1 \leq t \leq m$. By (\ref{EB14}),
\begin{align*}
a + \ell(L) &= \reg(G_I(\overline{M})) + \ell(L) = \reg(G_I(M)) \leq \max\{a,m\} \\
&= a+\max\{0,m-a\} \leq a + \ell(K) = a+\ell(L).
\end{align*}
This implies $\ell(K) = m - a$, and so $\ell(K_t) = 1$ for $a+1 \leq t \leq m$ and $\ell(K_t) = 0$ for all  other values.

\end{proof}

Now we can state and prove the main results of this section. It is  interesting to mention that the equality $e_1 = {e_0 - b +1 \choose 2}$ implies the Cohen-Macaulayness of $G_I(M)$.  This implication was shown in \cite[Proposition 2.13]{EV} for the case $I = \mm$ and $M  = A$.
\begin{Theorem} \label{DLB3}Let  $M$ be an one-dimensional  Cohen-Macalay module. Let $b$ be a positive integer such that $IM \subseteq \mm^b M.$ Then the following conditions are equivalent:
                \begin{itemize}
 \item[(i)]  $\reg(G_I(M)) = {e_0 - b +2\choose 2}  - e_1 -1;$
 \item[(ii)] $\ HP_{I,M}(z) = \frac{b+\sum_{i=1}^{e_0-b}z^i}{1-z};$
 \item[(iii)]  $e_1 = {e_0 - b +1 \choose 2};$
 \item[(iv)] $\reg(G_I(M)) =  e_0 -b$ and $G_I(M)$ is a Cohen-Macaulay module.
                \end{itemize}

 Moreover, if one of the above condition holds  then $b = \max\{t\mid IM \subseteq m^tM\}$.
\end{Theorem}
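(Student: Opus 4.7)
The plan is to close the cycle of implications (i) $\Rightarrow$ (iii), (ii) $\Leftrightarrow$ (iii), (iv) $\Rightarrow$ (i), and (iii) $\Rightarrow$ (iv); the ``Moreover'' clause then follows from Remark \ref{CY0}(i), since each of (i)--(iv) implies (iii). The equivalence (ii) $\Leftrightarrow$ (iii) is immediate from Lemma \ref{BDA5}(ii).

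For (i) $\Rightarrow$ (iii), Proposition \ref{BDA7} gives $\reg(G_I(M)) \leq e_0 - b$, so (i) forces $\binom{e_0-b+2}{2} - e_1 - 1 \leq e_0 - b$; via the Pascal identity $\binom{e_0-b+2}{2} - \binom{e_0-b+1}{2} = e_0 - b + 1$ this rearranges to $e_1 \geq \binom{e_0-b+1}{2}$, which together with Lemma \ref{BDA5}(i) yields (iii). For (iv) $\Rightarrow$ (i), apply Lemma \ref{BDA4} directly to $E = G_I(M)$, Cohen--Macaulay of dimension one and generated in degree $0$ by hypothesis. Choosing a regular $z^* \in R_1$, we get $d(E/z^*E) = 0$ and $\rho := h_{G_I(M)}(0) = \ell(M/IM)$. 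Since $G_I(M)$ is Cohen--Macaulay, $\overline{G_I(M)} = G_I(M)$, so the earlier lemma giving $h_{\overline{G_I(M)}}(0) \geq b$ (a Nakayama chain argument relying on $\dim M = 1$) yields $\rho \geq b$; Lemma \ref{BDA4}(i) combined with $\reg(G_I(M)) = e_0 - b$ forces $\rho \leq b$, hence $\rho = b$. The explicit Hilbert function from Lemma \ref{BDA4}(ii)(b) then gives $e_1 = \sum_{i \geq 0}(e_0 - h_{G_I(M)}(i)) = \binom{e_0-b+1}{2}$, which rearranges to (i).

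The main obstacle is (iii) $\Rightarrow$ (iv): we must prove that the extremal value of $e_1$ forces $G_I(M)$ to be Cohen--Macaulay, the novelty flagged just before the theorem statement, generalizing \cite[Proposition 2.13]{EV} from the case $I = \mm$, $M = A$. Invoking (iii) $\Leftrightarrow$ (ii), the Hilbert function of $G_I(M)$ is fully determined: $h(t) = b + t$ for $0 \leq t \leq e_0 - b$ and $h(t) = e_0$ otherwise. Put $K = H^0_{G_+}(G_I(M))$ and $\overline{G_I(M)} = G_I(M)/K$, Cohen--Macaulay of dimension one; the task reduces to showing $K = 0$. The inequality $h_{\overline{G_I(M)}}(0) \geq b$ from the earlier lemma, combined with $h_{\overline{G_I(M)}}(0) \leq h_{G_I(M)}(0) = b$, already shows $K_0 = 0$. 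The decisive and hardest step is to propagate this to $K = 0$ in all degrees: I plan to mimic the Rossi--Valla proof of Lemma \ref{BDA5}, where for a superficial $x \in I$ the defect $\binom{e_0-b+1}{2} - e_1$ is expressed as a sum of nonnegative lengths of the form $\ell(xM \cap I^n M / xI^{n-1}M)$; extremality in (iii) then forces each summand to vanish, which is exactly the Valabrega--Valla criterion for Cohen--Macaulayness of $G_I(M)$. Once Cohen--Macaulayness is established, $\reg(G_I(M)) = e_0 - b$ follows from Lemma \ref{BDA4}(ii) applied with $\rho = b$, completing (iv).
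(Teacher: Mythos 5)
Your overall architecture (the cycle (i)~$\Rightarrow$~(iii)~$\Leftrightarrow$~(ii), (iii)~$\Rightarrow$~(iv)~$\Rightarrow$~(i), plus Remark \ref{CY0}(i) for the ``Moreover'' clause) is logically complete, and your arguments for (i)~$\Rightarrow$~(iii) and (iv)~$\Rightarrow$~(i) are correct and essentially the paper's: Proposition \ref{BDA7} plus Lemma \ref{BDA5}(i) for the first, and Lemma \ref{BDA4}(ii) with $d=0$, $\rho=b$ for the second (the paper extracts $e_1$ by passing through the equivalence (ii)~$\Leftrightarrow$~(iii) rather than the formula $e_1=\sum_i(e_0-h(i))$, but that is cosmetic).

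The gap is in (iii)~$\Rightarrow$~(iv), which is precisely the step the paper flags as its point of interest (the Cohen--Macaulayness of $G_I(M)$ forced by the extremal $e_1$). You correctly reduce to showing $K:=H^0_{G_+}(G_I(M))=0$ and you get $K_0=0$, but the ``propagation'' is only an announced plan, and its central claim --- that the defect $\binom{e_0-b+1}{2}-e_1$ is a sum of nonnegative Valabrega--Valla lengths $\ell\bigl((xM\cap I^nM)/xI^{n-1}M\bigr)$ --- is not substantiated and appears to have the sign backwards. For a superficial $x$ on a one-dimensional Cohen--Macaulay $M$ one has $e_1=\sum_{n\ge 0}\ell(I^{n+1}M/xI^nM)=\sum_{n\ge 1}\ell\bigl(I^n(M/xM)\bigr)+\sum_{n\ge 1}\ell\bigl((xM\cap I^nM)/xI^{n-1}M\bigr)$, so the Valabrega--Valla terms contribute \emph{positively to $e_1$ itself}, not to the defect; extremality of $e_1$ then only tells you that the two nonnegative sums together equal $\binom{e_0-b+1}{2}$, and since the first sum is merely bounded above by that binomial, you cannot conclude that the second sum vanishes without further work. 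The paper sidesteps this entirely: from (ii) and Lemma \ref{BDA0} the postulation number is $e_0-b-1$, so Lemma \ref{BDB0} and Proposition \ref{BDA7} pin down $\reg(G_I(M))=e_0-b$; equality in (\ref{EB9}) then gives $h_{\overline{G_I(M)}}(0)=b$, Lemma \ref{BDA4}(ii) determines the Hilbert function of $\overline{G_I(M)}$, and since this coincides with the Hilbert function of $G_I(M)$ given by (ii), the surjection $G_I(M)\twoheadrightarrow\overline{G_I(M)}$ is an isomorphism and $K=0$. You should either adopt that Hilbert-series comparison or supply a genuine proof that the Valabrega--Valla conditions hold; as written, the decisive step is missing.
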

\begin{proof}
The last statement follows from Remark \ref{CY0} (i).

(ii) $\Longleftrightarrow$ (iii) is the Rossi-Valla result (see  Lemma \ref{BDA5} (ii)).

(i) $\Longrightarrow$ (iii): Since  $M$ is a Cohen-Macaulay module, by  Proposition \ref{BDA7},  $\reg(G_I(M)) \leq e_0 - b$. Hence ${e_0 - b +2\choose 2}  - e_1 -1\leq e_0-b$, or equivalently $e_1 \geq {e_0 - b +1 \choose 2}$. By Lemma \ref{BDA5} (i),  we then get  $e_1= {e_0 - b +1 \choose 2}$.

(ii) $\Longrightarrow$ (iv): Note that we always have $e_0 \geq b$. By Remark  \ref{BDA0} (ii), $p(G_I(M)) = e_0 - b - 1$. Applying  Lemma \ref{BDB0}, we get  $e_0-b \leq \reg(G_I(M))$. Combining with   Proposition \ref{BDA7} we can conclude that  $\reg(G_I(M)) =  e_0 -b.$

 By Lemma \ref{BDA6},  $\reg(G_I(M)) = \reg(\overline{G_I(M)})= e_0 - b$.  From (\ref{EB9}) we get $h_{ \overline{G_I(M)}}(0) = b.$ By Lemma \ref{BDA4} (ii) this implies
 $$
h_{ \overline{G_I(M)}}(t) =
\begin{cases}
b \ \ \ \ \ \ \ \ \ \text{if}\ t = 0, \\
t+ b  \ \ \ \ \text{if} \  1\leq t \leq  e_0 -b-1 ,\\
e_0  \ \ \ \ \ \ \ \ \text{if} \ t \geq  e_0 -b.
\end{cases}
$$
Consequently,
\begin{equation*}
\ HP_{\overline{G_I(M)}}(z) = \frac{b+\sum_{i=1}^{e_0-b}z^i}{1-z} = HP_{I,M}(z).
\end{equation*}
Hence $G_I(M) = \overline{G_I(M)}$ and $G_I(M)$ is a Cohen-Macaulay module.

(iv) $\Longrightarrow$ (i): Since $G_I(M)$ is a Cohen-Macaulay module and $\reg(G_I(M) )= e_0 - b$, using Lemma \ref{BDA4} (ii) we get  $HP_{I,M}(z) =  \frac{b+\sum_{i=1}^{e_0-b}z^i}{1-z}.$ By virtue of  the equivalence of (ii) and  (iii), it follows  that $ {e_0 - b +1 \choose 2} = e_1.$ Therefore
$$\begin{array}{ll}
\reg(G_I(M)) =  e -b & = e - b  + {e_0 - b +1 \choose 2} - e_1\\
& =   {e_0 - b +2 \choose 2} - e_1 -1.
\end{array}$$
\end{proof}

The following example shows that  the assumption $G_I(M)$ being a  Cohen-Macalay module in  (iv) of the above theorem is essential.
 \begin{Example}\label{VDB1}{\rm
Let $A = k[[t^3,t^4,t^5]] \cong k[[x,y,z]]/(x^3 - yz,   xz - y^2, x^2y - z^2)$, where $k$ is a field. Let $I = (t^3,t^4)$.
We  have $b = 1$. Using CocoA package \cite{Co}, we can compute
$HP_{I,A}(z) = \frac{2+z^2}{1-z}.$
Hence $e_0 = 3$,  $e_1 = 2$. By Lemma \ref{BDA0},  $p(G_I(A)) = 1$. Using also  Lemma \ref{BDB0}, we then get  $\reg(G_I(A)) \geq2$. By Proposition \ref{BDA7},  $\reg(G_I(A)) \leq e_0 -b =2,$ which yields $\reg(G_I(A)) = e_0 -b = 2$.  However  $2= e_1 \neq{ e_0 - b +1 \choose 2}=3. $ Note that, by   Theorem \ref{DLB3},  $ G_I(A)$ in this example cannot be a Cohen-Macalay  ring.}
\end{Example}

\begin{Theorem} \label{DLB4}Let  $M$ be an one-dimensional module and $\depth(M) = 0$. Let $b$ be a positive integer such that $IM \subseteq \mm^bM.$ Then the following conditions are equivalent:
                \begin{itemize}
 \item[(i)]  $\reg(G_I(M)) = {e_0 - b +2\choose 2}  - e_1 -1;$
 \item[(ii)] $HP_{I,M}(z) = \frac{b+\sum_{i=1}^{e_0-b+1}z^i-z^{{e_0 - b +2\choose 2}  - e_1}}{1-z}.$
                \end{itemize}

 Moreover, if one of the above condition holds then $b = \max\{t\mid IM \subseteq m^tM\}$ .
\end{Theorem}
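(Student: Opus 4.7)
The plan is to reduce both directions to the Cohen--Macaulay case (Theorem \ref{DLB3}) using the zero-dimensional ``tail'' $L := H^0_\mm(M)$ (which is nonzero since $\depth(M)=0$) and the Cohen--Macaulay quotient $\overline{M} := M/L$. The key observation is that the proof of Theorem \ref{DLA8} already packages the chain
$$\reg(G_I(M)) \le \reg(G_I(\overline{M})) + \ell(L) \le (e_0-b) + \left({e_0-b+1\choose 2} - e_1\right) = {e_0-b+2\choose 2} - e_1 - 1,$$
which I will reuse verbatim, citing \cite[Lemma 4.3]{L}, Proposition \ref{BDA7}, Lemma \ref{BDA5}(i), and the identity $\ell(L) = \overline{e}_1 - e_1$ of \cite[Proposition 2.3]{RV}.

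For (i) $\Rightarrow$ (ii), I would argue that equality in (i) forces each inequality in the chain above to collapse simultaneously, giving $\reg(G_I(\overline{M})) = e_0 - b$, $\overline{e}_1 = {e_0-b+1\choose 2}$, and $\reg(G_I(M)) = \reg(G_I(\overline{M})) + \ell(L)$. The first two let me invoke Theorem \ref{DLB3} applied to the Cohen--Macaulay module $\overline{M}$ to read off $HP_{I,\overline{M}}(z) = (b + \sum_{i=1}^{e_0-b} z^i)/(1-z)$, and the third lets me invoke Lemma \ref{BDB1} to read off $HP_K(z) = \sum_{i=e_0-b+1}^{e_0-b+\ell(L)} z^i$, where $K$ is as defined there. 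Adding the two series via the short exact sequence $0 \to K \to G_I(M) \to G_I(\overline{M}) \to 0$, clearing the common denominator using $(1-z)\sum_{i=e_0-b+1}^{e_0-b+\ell(L)} z^i = z^{e_0-b+1} - z^{e_0-b+\ell(L)+1}$, and noting the arithmetic identity $e_0 - b + \ell(L) + 1 = {e_0-b+2\choose 2} - e_1$, will produce exactly the numerator displayed in (ii).

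For (ii) $\Rightarrow$ (i), my strategy is to read the postulation number straight off the given Hilbert--Poincare series via Lemma \ref{BDA0}. To certify that the numerator in (ii) really has the indicated degree (and is not subject to cancellation), I first note that $\depth(M)=0$ forces $\overline{e}_1 > e_1$, hence $e_1 < {e_0-b+1\choose 2}$, which separates the $-z^{(\cdot)}$ term from the sum and yields $p(G_I(M)) = {e_0-b+2\choose 2} - e_1 - 1$. Remark \ref{BDA1} then presents the dichotomy $p \le \reg$ or $p \le \reg - 1$ according to whether $\depth(G_I(M))$ vanishes; the second possibility would give $\reg(G_I(M)) \ge {e_0-b+2\choose 2} - e_1$, contradicting Theorem \ref{DLA8}, so only the first can occur and (i) follows.

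Finally, the ``Moreover'' clause is free: the chain equalities in the (i) $\Rightarrow$ (ii) step give $\overline{e}_1 = {e_0-b+1\choose 2}$, so the corresponding clause in Theorem \ref{DLB3} applied to $\overline{M}$ yields $b = \max\{t \mid I\overline{M} \subseteq \mm^t\overline{M}\}$, and any strict improvement $IM \subseteq \mm^{b+1}M$ would descend to $\overline{M}$ and violate this. The step I expect to be most delicate is the bookkeeping in (i) $\Rightarrow$ (ii), namely combining $HP_{I,\overline{M}}$ and $HP_K$ over the common denominator and recognising the resulting exponent as ${e_0-b+2\choose 2} - e_1$; everything else is formal once Theorem \ref{DLB3} and Lemma \ref{BDB1} are on the table.
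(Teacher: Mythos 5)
Your proposal is correct and follows essentially the same route as the paper: reduce to $\overline{M}=M/H^0_\mm(M)$, force the chain of inequalities from the proof of Theorem \ref{DLA8} to collapse, combine $HP_{I,\overline{M}}$ (via Lemma \ref{BDA5}(ii)) with $HP_K$ (via Lemma \ref{BDB1}) over the exact sequence for (i)$\Rightarrow$(ii), and read off the postulation number from Lemma \ref{BDA0} together with Remark \ref{BDA1} and Theorem \ref{DLA8} for (ii)$\Rightarrow$(i). The bookkeeping you flagged as delicate checks out, and your handling of the ``Moreover'' clause matches the paper's.
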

\begin{proof}
(i) $\Longrightarrow$ (ii):
For simplicity we set $\overline{M}: = M/L$,  $\overline{e}_0: = e_0(I,\overline{M})   = e_0$ and $ \overline{e}_1: = e_1(I,\overline{M}),$ where $L: = H^0_\mm(M)$. Analyzing the proof of Theorem \ref{DLA8} we see that the condition (i) implies
\begin{equation}\label{EB12.1}
\reg(G_I(\overline{M})) = e_0 - b,
\end{equation}
\begin{equation} \label{EB13}
 \overline{e}_1 = {e_0 - b +1 \choose 2},
\end{equation}
 and
  \begin{equation} \label{EB10}
\reg(G_I(M)) = \reg(G_I(\overline{M})) + \ell(L).
\end{equation}
By \cite[Proposition 2.3]{RV} and (\ref{EB13}) we get
\begin{equation}
\ell(L) = \overline{e}_1 - e_1 = {e_0 - b +1 \choose 2} -e_1.
\end{equation}
Using Lemma \ref{BDB1}, the equality (\ref{EB10}) implies
\begin{equation} \label{EB11}
HP_K(z) = \sum_{ \reg(G_I(\overline{M})) + 1}^{ \reg(G_I(\overline{M})) + \ell(L)}z^i,
\end{equation}
where $K = \oplus_{n \geq 0}\frac{I^{n+1}M+L\cap I^nM}{I^{n+1}M}.$
Combining  (\ref{EB12.1}),  (\ref{EB13})  and (\ref{EB11})  we get
$$HP_K(z) = \sum_{e_0-b+1}^{{e_0 - b +2\choose 2}  - e_1-1}z^i.$$
Using Lemma \ref{BDA5}(ii)  and (\ref{EB12.1}) we have
$$HP_{I,\overline{M}}(z) = \frac{b+\sum_{i=1}^{e_0-b}z^i}{1-z}.$$
 Hence, using the short exact sequence (\ref{EB14})
we conclude that
\begin{align*}
HP_{I,M}(z) &= HP_{I,\overline{M}}+HP_K(z) \\
& = \frac{b+\sum_{i=1}^{e_0-b}z^i }{1-z} +\sum_{e_0-b+1}^{{e_0 - b +2\choose 2}  - e_1-1}z^i\\
& =  \frac{b+\sum_{i=1}^{e_0-b+1}z^i-z^{{e_0 - b +2\choose 2}  - e_1}}{1-z}.
\end{align*}

(ii) $\Longrightarrow$ (i):  By Lemma \ref{BDA5} (i), ${e_0 - b +2\choose 2}  - e_1 > e_0 - b +1 $. Hence, by  Lemma \ref{BDA0}, $p(G_I(M)) = {e_0 - b +2\choose 2}  - e_1 -1$. By Remark \ref{BDA1} (i),  we obtain
$$ {e_0 - b +2\choose 2}  - e_1 -1 \leq \reg(G_I(M)).$$
Combining with Theorem   \ref{DLA8}  we then get  $\reg(G_I(M)) =  {e_0 - b +2\choose 2}  - e_1 -1.$

For the last statement we see that in this case the equality (\ref{EB13})  holds. By Remark \ref{CY0} (i), $I\overline{M} \nsubseteq \mm^{b+1}\overline{M}$. This implies
$IM \nsubseteq \mm^{b+1}M$ and $b = \max\{t\mid IM \subseteq \mm^tM\}$.
\end{proof}

There are many examples of one-dimension Cohen-Macaulay  rings $(A,\mm)$ and $\mm$-primary ideals  such that  $e_1  ={e_0 - b +1\choose 2}$. Hence, by Theorem \ref{DLB3},  the upper bound ${e_0 - b+2\choose 2}-e_1-1$ is sharp. The following example show that this bound is also attained in the non-Cohen-Macaulay case.

\begin{Example}\label{VDB3}{\rm
Let $A = k[[x,y]]/(x^sy^{u+v},x^{s+1}y^u)$, where $s,u,v \in \NN$ and $v> 0$. Then $G_\mm(A) \cong k[x,y]/(x^sy^{u+v},x^{s+1}y^u)$ and $b = 1$.  It is easy  to see that
$$HP_{\mm,A}(z) = \frac{\sum_{i=0}^{s+u}z^i-z^{s+u+v}}{1-z}, e_0 = s+u,  e_1 = \frac{(s+u)(s+u-1)}{2}-v,$$
  and $\reg(G_\mm(A)) = s + u+v -1$.  These equalities show that all conditions in Theorem \ref{DLB4}
hold.}
\end{Example}

\noindent {\bf ACKNOWLEDGMENT}

The author is grateful to Prof. L. T. Hoa for his guidance and the referee for critical remarks. He also would like to thank Prof. J. Elias for pointing him some errors in the references.

\end{document}